\newcommand{\bX}{\boldsymbol{X}}
\newcommand{\blambda}{\boldsymbol{\lambda}}
\newtheorem{theorem}{Theorem}
\newtheorem*{theorem*}{Theorem}
\theoremstyle{definition}
\theoremstyle{remark}
\DeclareMathAlphabet{\mathpzc}{OT1}{pzc}{m}{it}
\newcommand{\var}{\text{var}}
\newcommand{\corr}{\text{corr}}
\begin{document}

\baselineskip=24pt

\begin{center}
{\bf \Large Confidence intervals centred on bootstrap smoothed estimators: an impossibility result}
\end{center}

\medskip

\begin{center}
{\bf \large Paul Kabaila$^*$ and Christeen Wijethunga}
\end{center}

\medskip

\begin{center}
{\sl Department of Mathematics and Statistics, La Trobe University, Melbourne, Australia}
\end{center}

\vspace{2cm}


\vspace{12cm}

\noindent * Corresponding author. Department of Mathematics and Statistics,
La Trobe University, Victoria 3086, Australia. Tel.: +61 3 9479 2594; fax +61 3 9479 2466.
{\sl E-mail address:} P.Kabaila@latrobe.edu.au.

\newpage


\noindent \textbf{ABSTRACT}

\medskip
 

\noindent Recently, Kabaila and Wijethunga assessed the performance of a confidence interval centred on a bootstrap smoothed estimator, with width proportional to an estimator
of Efron's delta method approximation to the standard deviation of this estimator. They used a testbed situation consisting of two nested linear regression models, with error variance assumed known, and model selection using a preliminary hypothesis test.
This assessment was in terms of coverage and scaled expected length, where the scaling is with respect to the expected length of the usual confidence interval 
with the same minimum coverage probability. 
They found that this confidence interval has scaled expected length  
that (a) has a maximum value that may be much greater than 1 and (b)  is greater than a number slightly less than 1 when the simpler model is correct. We therefore ask the following question. 
For a confidence interval, centred on the bootstrap smoothed estimator, does there exist a formula for its data-based width such that, in this testbed situation, it has the desired minimum coverage and scaled expected length that
(a) has a maximum value that is not too much larger than 1 and
(b) is substantially less than 1 when the simpler model is correct?
Using a recent decision-theoretic performance bound due to Kabaila and Kong, it is shown that the answer to this question is `no' for a wide range of scenarios.



\bigskip

\noindent \textbf{KEYWORDS} \newline
confidence interval; coverage probability; decision theory; bootstrap smoothed estimator;
model selection; nested linear regression models; scaled expected length.


\newpage

\section{\textbf{Introduction}}

A bootstrap smoothed (or bagged, \citeauthor{Breiman1996}, \citeyear{Breiman1996}) estimator is a smoothed version of an estimator found after preliminary data-based model selection. A key result of \cite{Efron2014} is a very convenient and widely applicable formula for a delta method approximation to the standard deviation of this estimator. 
\cite{Efron2014} also considered 
 a confidence interval, with nominal coverage $1-\alpha$, centred on the bootstrap estimator and with half-width equal to the $1 - \alpha/2$ quantile of the standard normal distribution multiplied by an estimate of this approximation to the standard deviation. 
To rigorously assess the performance of this confidence interval, 
\cite{KabailaWijethunga2019a}
 use the following testbed situation. They consider two nested normal linear regression models with known error variance and parameter of interest 
$\theta$, a specified linear combination of the regression parameters.
These two nested models are the full model and the simpler model. The simpler model is obtained when $\tau$,
a distinct specified linear combination of the regression parameters, is set to 0.
The bootstrap smoothed estimator that they consider is a smoothed version of the post-model-selection estimator obtained after a preliminary test of the null 
hypothesis that $\tau = 0$ against the alternative hypothesis that 
$\tau \ne 0$.

For the testbed situation they consider, \cite{KabailaWijethunga2019a} derive computationally convenient exact formulas for the ideal (i.e. in the limit as the number of bootstrap replications diverges to infinity)
bootstrap smoothed estimator and Efron's delta method approximation to the standard deviation of this estimator. 
They also assess the performance of the confidence interval, with nominal coverage $1-\alpha$, centred on the ideal bootstrap estimator and with half-width equal to the $1 - \alpha/2$ quantile of the standard normal distribution multiplied by an estimate of the ideal delta method approximation to the standard deviation. We call this the 
${\bf sd}_{\rm delta}$ {\bf interval}.
This confidence interval has the attractive features that (A1) it has endpoints that are continuous functions of the data and (A2) to an excellent approximation it reverts to the usual $1 - \alpha$ confidence interval based on the full model when the data and the simpler model are highly discordant.

For the testbed situation, \cite{KabailaWijethunga2019a} assess the performance of the 
${\bf sd}_{\rm delta}$ {\bf interval} using its coverage probability and scaled expected length, defined as follows. The scaled expected length of a confidence interval is defined to be its expected length divided by the
expected length of the usual confidence interval, with the same minimum coverage probability, based on the full model. Let 
$\rho$ denote the known correlation between the least squares estimators of $\theta$ and $\tau$.
For $\rho = 0$, the 
${\bf sd}_{\rm delta}$ {\bf interval}
is identical to the usual $1 - \alpha$ confidence interval based on the full model. However, as $|\rho|$ increases, these two confidence intervals increasingly differ from each other.

Define the parameter $\gamma$ to be
$\tau$ divided by the standard deviation of the least squares estimator of $\tau$, so that $\gamma$ is unknown.
For given nominal coverage $1 - \alpha$ and size of preliminary test, both the coverage probability and the scaled expected length of 
the 
${\bf sd}_{\rm delta}$ {\bf interval} are functions of $|\rho|$ and $|\gamma|$. 
Figure 5 of \cite{KabailaWijethunga2019a} and Figures 6--10 of the Supplementary Material for this paper show the following.
The ${\bf sd}_{\rm delta}$ {\bf interval} 
 has scaled expected length that
 (a) has a maximum value that is an increasing function of $|\rho|$ that can be much larger than 1 for large $|\rho|$ and
(b) is greater than a number slightly less than 1 when the simpler model is correct (i.e. when $\gamma = 0$).

In the context of the testbed situation  used by \cite{KabailaWijethunga2019a}, we have sought a formula for the width of a confidence interval centred on the ideal bootstrap smoothed estimator that leads to this confidence interval having scaled expected length
that is substantially less than 1 when the simpler model is correct. We tried five different formulae, including the width being based on the actual standard deviation and the parametric version of the symmetric nonparametric bootstrap confidence
described by \citeauthor{Hall1992} (\citeyear{Hall1992}, Section 3.6). None of these confidence intervals possessed this
desired property.
This then raises the following question:

\begin{quote}
	In the context of this testbed situation, is there any formula for the width of a confidence interval centred on the ideal bootstrap smoothed estimator that leads to this confidence interval having the attractive features (A1) and (A2), the desired minimum coverage probability $1 - \alpha$ and 
	scaled expected length that (a) has a maximum value that is not too much larger than 1 and (b) is substantially less than 1 when the simpler model is correct?
\end{quote}

\noindent We use a performance bound derived by \cite{KabailaKong2016} to answer this question. This bound builds on the earlier work of \cite{Blyth1951},
\cite{HodgesLehmann1952}, \citeauthor{Kempthorne1983} (\citeyear{Kempthorne1983}, \citeyear{Kempthorne1987}, \citeyear{Kempthorne1988}) and \cite{KabailaTuck2008}.
As shown in Section 3 of the present paper, the application of this performance bound is carried out by computing two unfavourable discrete distributions, each consisting of a finite number of probability masses. 
We have programmed this computation in \texttt{R}, using simple code that is based on
the theoretical results described in Sections 3 and 5 and Appendix A.3. 
For the scenarios described in Section 6 the answer to the question above is `no'.


\section{\textbf{Mathematical specification of the question we will answer}}

We consider the testbed situation consisting of 
two nested linear regression models: the full model
${\cal M}_2$ and the simpler model ${\cal M}_1$.
Suppose that the full model ${\cal M}_2$ is given by
\begin{equation*}
\bm{y} = \bm{X} \bm{\beta} + \bm{\varepsilon}
\end{equation*}
where $\bm{y}$ is a random $n$-vector of responses, $\bX$ is a known $n \times p$ matrix with linearly independent columns ($p < n$), $\bm{\beta}$ is an unknown $p$-vector of parameters and $\bm{\varepsilon} \sim N(\bm{0}, \sigma^2 \bm{I}_n)$, with $\sigma^2$ assumed known.
Suppose that $\bm{\beta} = [\theta, \tau, \bm{\lambda}^\top ]^\top$, where $\theta$ is the scalar parameter of interest, $\tau$ is a scalar parameter used in specifying the model ${\cal M}_1$ and $\blambda$ is a ($p-2$)-dimensional parameter vector. 
The model ${\cal M}_1$ is ${\cal M}_2$ with $\tau=0$. As shown by
\cite{KabailaWijethunga2019a}, this scenario can be obtained by a change of parametrisation from a more general scenario.

Let $\widehat{\theta}$ and $\widehat{\tau}$ denote the least squares estimators of $\theta$ and $\tau$, respectively.
Let $v_\theta = \var(\widehat{\theta})/\sigma^2$, $v_\tau = \var(\widehat{\tau})/\sigma^2$ and
$\rho=\corr(\widehat{\theta}, \widehat\tau)$. 
Note that $v_\theta$, $v_\tau$ and $\rho$ are known.
Let $\gamma=\tau/\big(\sigma{v_\tau}^{1/2}\big)$, which is an unknown parameter, and also let $\widehat{\gamma}=\widehat{\tau}/\big(\sigma{v_\tau}^{1/2}\big)$.
Suppose that we carry out a preliminary test of the null hypothesis $\tau = 0$ against the alternative hypothesis
$\tau \neq 0$. 
For given value of
the positive number $d$, suppose that we accept this null hypothesis when $|\widehat{\gamma}| \leq d $; otherwise we reject this null hypothesis.
Let $\widetilde{\alpha}$ denote the size of this preliminary test.
If $|\widehat{\gamma}| \leq d $ we choose model ${\cal M}_1$; otherwise we choose model ${\cal M}_2$.

%

Let
$k(\gamma) 
= \phi(d + \gamma) 
- \phi(d - \gamma) 
+ \gamma \big (\Phi(d- \gamma) - \Phi(-d-\gamma ) \big)$,
where $\phi$ and $\Phi$ denote the $N(0,1)$ pdf and cdf.
\cite{KabailaWijethunga2019a}
prove that 
the ideal (i.e. in the limit as the number of bootstrap replications diverges to infinity) bootstrap smoothed estimator of $\theta$ is
\begin{equation*}
\widetilde{\theta} = \widehat{\theta} - \rho \, \sigma \, v_\theta^{1/2} \, k(\widehat{\gamma}).
\end{equation*}
\cite{Efron2014} derived a delta method approximation to the standard deviation of the bootstrap smoothed estimator.
\cite{KabailaWijethunga2019a} show that, in the limit as the number of bootstrap replications diverges to infinity, this approximation is given by 
$\sigma \, v_{\theta}^{1/2} \, r_{\text{delta}}(\gamma)$, where 
$r_{\rm delta}(\gamma)
= \big( 1 - 2 \rho^2 q(\gamma) + \rho^2 q^2(\gamma) \big)^{1/2}$,
with $q(\gamma) = \Phi(d-\gamma) - \Phi(-d-\gamma ) - d \, \big( \phi(d+\gamma) + \, \phi(d-\gamma ) \big)$.
Let $z(a) = \Phi^{-1}(1 - a/2)$.
The 
confidence interval for $\theta$, with nominal coverage $1-\alpha$, centred on  $\widetilde{\theta}$ and with half-width equal to $z(\alpha)$ multiplied by an estimate of this approximation to the standard deviation is
\begin{equation*}
\left[\widehat{\theta} - \sigma \, v_\theta^{1/2} \, \rho \,  k(\widehat{\gamma})
\pm  \sigma \, v_\theta^{1/2} \, \big(z(\alpha) \,r_{\rm delta}(\gamma) \big)\right].
\end{equation*}
The function $r_{\rm delta}: \mathbb{R} \rightarrow [0, \infty)$ is a continuous even function and $r_{\rm delta}(x) \rightarrow 1$ as $x \rightarrow \infty$.
As shown in the Supplementary Material, 
to an excellent approximation, $r_{\rm delta}(x) = 1$ for all $x \ge c$,
where $c = 10$.

We consider confidence intervals of the form
\begin{equation*}
\text{CI}(s) = \left[\widehat{\theta} - \rho \, \sigma \, v_\theta^{1/2} \, k(\widehat{\gamma})
\pm \sigma \, v_\theta^{1/2} \, s(\widehat{\gamma})\right],	
\end{equation*}
where $s: \mathbb{R} \rightarrow [0, \infty)$ is an even function.
To apply the performance bound of \cite{KabailaKong2016}, we suppose that
$s(x) = z(\alpha)$ for all $x \ge c$.
Let ${\cal D}$ denote the class of functions $s$ 
that satisfy this property. We do not require that the function $s$ is continuous.
In other words, a confidence interval $\text{CI}(s)$, where $s \in {\cal D}$, does not necessarily 
possess the attractive feature (A1). In addition, such a confidence interval does not necessarily possess the attractive feature (A2). This is because 
$\widehat{\gamma} \sim N(0,1)$ under the simpler model, so that the data and this model can be said to be highly discordant when $|\widehat{\gamma}| > 4$, say.

We now introduce the following question:
\begin{quote}
	Is there a confidence interval $\text{CI}(s)$, specified by a function $s$ in 
	${\cal D}$, such that this confidence interval has 
	minimum coverage probability $1 - \alpha$ and scaled expected length 
	that (a) has a maximum value that is not too much larger than 1 and (b) is substantially less than 1 when the simpler model is correct? 	
\end{quote}
If the answer to this question is `no' then the answer to the question posed in the introduction must also be `no'.

\section{\textbf{Exact formulas for the coverage probability and scaled expected length of $\boldsymbol{\text{CI}(s)}$}}

For notational convenience, let $b(x) = \rho\, k(x)$ for all $x \in \mathbb{R}$, so that
\begin{equation*}
\text{CI}(s) = \left[ \widehat{\theta} - \sigma \, v_\theta^{1/2}\, b(\widehat{\gamma}) \pm \sigma \, v_\theta^{1/2} \, s(\widehat{\gamma}) \right].
\end{equation*}
Every $s \in {\cal D}$ is an even function that satisfies $s(x) = z(\alpha)$ for all $x \ge c$. Hence every $s \in {\cal D}$ is 
specified by $s$ restricted to the domain $[0, c]$. In this section we present exact formulas for the coverage probability and   
scaled expected length of $\text{CI}(s)$ in terms of the function $s$ restricted to the domain 
$[0, c]$. 
The function $k$ is a continuous odd function
and $k(x) \rightarrow 0$ as $x \rightarrow \infty$.
As shown in the Supplementary Material, 
to an excellent approximation, $k(x) = 0$ for all $x \ge c$,
where $c = 10$. For computational convenience, we approximate $k(x)$, and therefore $b(x)$, by 0 for all $x \ge c$.

The following theorem, proved in Appendix A.1, provides an exact formula for the coverage probability of the confidence interval $\text{CI}(s)$ in terms of the function $s$ restricted to the domain $[0, c]$. 

\begin{theorem}
	For any given $\rho$ and function $s \in {\cal D}$, the coverage probability of 
	${\rm CI}(s)$ is a function of 
	$\gamma$. We denote this coverage probability by $c(\gamma; s, \rho)$.
	For any given $\rho$ and $s \in {\cal D}$, $c(\gamma; s, \rho)$ is an even function of $\gamma$.
	Also, for any given $\gamma$ and $s \in {\cal D}$, $c(\gamma; s, \rho)$ is an even function of $\rho$.
	Let
 $\ell(h, \gamma; x) = P \big(  b(h) - x \leq \widetilde{G} \leq  b(h) + x \big)$
 and 
 $\ell^\dag(h, \gamma) = P \big( -z(\alpha) \leq \widetilde{G} \leq z(\alpha) \big)$, for 
 $\widetilde{G} \sim N \big(\rho(h-\gamma), 1-\rho^2 \big)$.
 Now let $R_1(s, \gamma)$ denote
 \begin{equation*}
 \int_{0}^{c} \Big( \left( \ell^{\dag}(h, \gamma) - \ell(h, \gamma; s(h)) \right) \, \phi(h-\gamma) +  \left( \ell^{\dag}(-h, \gamma) - \ell(-h, \gamma; s(h)) \right) \, \phi(h+\gamma)\, \Big) dh. 
 \end{equation*}
 Then $c(\gamma; s, \rho) = 1 - \alpha - R_1(s, \gamma)$.
\end{theorem}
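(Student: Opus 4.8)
The plan is to obtain the coverage probability by conditioning on $\widehat{\gamma}$. First I would standardise: with $Z = (\widehat{\theta} - \theta)/\big(\sigma v_\theta^{1/2}\big)$, membership $\theta \in \text{CI}(s)$ is the event $b(\widehat{\gamma}) - s(\widehat{\gamma}) \le Z \le b(\widehat{\gamma}) + s(\widehat{\gamma})$, where $(Z, \widehat{\gamma})$ is bivariate normal with standard normal marginals, $E(\widehat{\gamma}) = \gamma$ and $\corr(Z, \widehat{\gamma}) = \rho$. In particular, for fixed $\rho$ and $s$ the coverage probability depends on the underlying parameters only through $\gamma$, which gives the first assertion. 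Conditioning on $\widehat{\gamma} = h$, the conditional law of $Z$ is $N\big(\rho(h - \gamma), 1 - \rho^2\big)$, i.e. the law of $\widetilde{G}$; hence the conditional coverage is $\ell(h, \gamma; s(h))$ and $c(\gamma; s, \rho) = \int_{-\infty}^{\infty} \ell(h, \gamma; s(h)) \, \phi(h - \gamma) \, dh$.

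Next I would subtract the coverage of the usual interval. Taking $b \equiv 0$ and $s \equiv z(\alpha)$ recovers the usual $1 - \alpha$ interval based on ${\cal M}_2$, which has coverage identically $1 - \alpha$; the same conditioning identity applied to it gives $1 - \alpha = \int_{-\infty}^{\infty} \ell^\dag(h, \gamma) \, \phi(h - \gamma) \, dh$. Subtracting, $c(\gamma; s, \rho) = 1 - \alpha - \int_{-\infty}^{\infty} \big( \ell^\dag(h, \gamma) - \ell(h, \gamma; s(h)) \big) \phi(h - \gamma) \, dh$. For $|h| \ge c$ we have $b(h) = 0$ (using the recorded approximation $k \equiv 0$ on $[c, \infty)$ together with $k$ odd) and $s(h) = z(\alpha)$ (by definition of ${\cal D}$ and evenness of $s$), so $\ell(h, \gamma; s(h)) = \ell^\dag(h, \gamma)$ there and the integrand vanishes; the range of integration therefore collapses to $[-c, c]$. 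Splitting $\int_{-c}^{c} = \int_0^c + \int_{-c}^0$ and substituting $h \mapsto -h$ in the second piece --- using only that $s$ and $\phi$ are even --- reproduces the two-term integrand of $R_1(s, \gamma)$, which establishes $c(\gamma; s, \rho) = 1 - \alpha - R_1(s, \gamma)$.

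For the two evenness statements I would argue directly from the coverage probability rather than from $R_1$. Since the interval $[-s(\widehat{\gamma}), s(\widehat{\gamma})]$ is symmetric about $0$, the event $\{-s(\widehat{\gamma}) \le Z - b(\widehat{\gamma}) \le s(\widehat{\gamma})\}$ is unchanged if $Z - b(\widehat{\gamma})$ is replaced by its negative; using that $b$ is odd and $s$ is even, this rewrites $c(\gamma; s, \rho)$ as the same probability computed for $(-Z, -\widehat{\gamma})$, whose joint distribution is exactly that of the model with $\gamma$ replaced by $-\gamma$. Hence $c(\cdot\,; s, \rho)$ is even. Replacing instead $(Z, \widehat{\gamma})$ by $(-Z, \widehat{\gamma})$, and noting that this negates $b(\widehat{\gamma}) = \rho \, k(\widehat{\gamma})$, which is equivalent to negating $\rho$, shows that $c(\gamma; s, \cdot)$ is even.

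I expect the only real obstacle to be bookkeeping: keeping the signs straight when folding the integral onto $[0, c]$ (in particular checking that it is $s(h)$, not $s(-h)$, that appears in the $\phi(h + \gamma)$ term), and verifying that $b$, $\ell$ and $\ell^\dag$ transform as claimed under $h \mapsto -h$ and under the sign flips of $\gamma$ and $\rho$. The truncation to $[-c, c]$ is immediate from the approximations $k(x) \approx 0$ and $r_{\rm delta}(x) \approx 1$ for $x \ge c$ already recorded in the Supplementary Material.
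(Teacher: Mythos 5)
Your proposal is correct and follows essentially the same route as the paper's Appendix A.1 proof: standardise to the bivariate normal pair, condition on $\widehat{\gamma}=h$ to get $\int \ell(h,\gamma;s(h))\,\phi(h-\gamma)\,dh$, subtract the analogous identity $1-\alpha=\int \ell^{\dag}(h,\gamma)\,\phi(h-\gamma)\,dh$, truncate to $[-c,c]$ using $b\equiv 0$ and $s\equiv z(\alpha)$ on the tails, fold onto $[0,c]$ via $h\mapsto -h$, and obtain the two evenness claims by negating $(G,\widehat{\gamma})$ and $G$ alone, respectively. The only (immaterial) difference is ordering, and your explicit remark that the tail identification $\ell=\ell^{\dag}$ requires $b(h)=0$ there is a point the paper handles by its stated convention of approximating $k$ by $0$ on $[c,\infty)$.
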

\noindent It follows from this theorem that, for any given $s \in {\cal D}$,  $R_1(s, \gamma)$ is an even function of $\gamma$. It also follows from this theorem that, for any given $\gamma$ and $s \in {\cal D}$,  $R_1(s, \gamma)$ is an even function of $\rho$.

Let $I$ denote the usual $1-\alpha$ confidence interval for $\theta$ based on the full model. In other words, 
\begin{equation*}
I 
= \left[ \widehat{\theta}  \pm z(\alpha) \, \sigma \, v_\theta^{1/2}  \right].
\end{equation*}
Define the scaled expected length of $\text{CI}(s)$ to be
\begin{equation*}
\frac{E(\text{length of }\text{CI}(s))}{E(\text{length of } I)}
= \frac{E(s(\widehat{\gamma}))}{z(\alpha)}.
\end{equation*}
It follows from \eqref{bivariate_theta_gamma} that this is a function of $\gamma$ for given function 
$s$. We denote this function by $e(\gamma; s)$. 
The following theorem, proved in Appendix A.2, provides an exact formula for $e(\gamma; s)$ in terms of the function $s$ restricted to the domain $[0, c]$. 

\begin{theorem}
	For given function $s$, the scaled expected length of ${\rm CI}(s)$ is a function of $\gamma$.
	We denote this scaled expected length by $e(\gamma; s)$. Then 
	$e(\gamma; s)  = 1 + R(s, \gamma)$, where
	\begin{equation}
	\label{ExactFormulaRsGamma}
	R(s, \gamma) = \int_{0}^{c} \left( \frac{s(h)}{z(\alpha)} - 1 \right) \big( \phi(h-\gamma) + \phi(h+\gamma) \big) \, dh.
	\end{equation}
\end{theorem}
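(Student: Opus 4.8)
The plan is to compute the expectation $E(s(\widehat{\gamma}))$ directly and then divide by $z(\alpha)$, using the identity for the scaled expected length already recorded above, namely $E(\text{length of }{\rm CI}(s))/E(\text{length of }I) = E(s(\widehat{\gamma}))/z(\alpha)$. The only distributional input needed is that, by \eqref{bivariate_theta_gamma}, the marginal distribution of $\widehat{\gamma}$ is $N(\gamma,1)$; this is exactly what makes the scaled expected length a function of $\gamma$ alone for fixed $s$. Hence
\begin{equation*}
e(\gamma; s) = \frac{1}{z(\alpha)} \int_{-\infty}^{\infty} s(h)\, \phi(h - \gamma)\, dh.
\end{equation*}

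First I would write $s(h) = z(\alpha) + \big(s(h) - z(\alpha)\big)$ and split the integral accordingly. The first piece integrates to $z(\alpha)$, and after dividing by $z(\alpha)$ this supplies the leading term $1$. For the second piece, the key structural fact is that $s \in {\cal D}$ gives $s(h) - z(\alpha) = 0$ for all $h \ge c$, and evenness of $s$ extends this to all $|h| \ge c$; therefore $\int_{-\infty}^{\infty} \big(s(h) - z(\alpha)\big)\phi(h-\gamma)\,dh = \int_{-c}^{c} \big(s(h) - z(\alpha)\big)\phi(h-\gamma)\,dh$.

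Next I would fold the interval $[-c, c]$ onto $[0, c]$. Splitting into $[-c, 0]$ and $[0, c]$ and applying the change of variable $h \mapsto -h$ on the first, together with $s(-h) = s(h)$ and $\phi(-h - \gamma) = \phi(h + \gamma)$, turns the $[-c,0]$ contribution into $\int_0^c \big(s(h) - z(\alpha)\big)\phi(h+\gamma)\,dh$. Combining this with the $[0,c]$ contribution and dividing through by $z(\alpha)$ yields
\begin{equation*}
e(\gamma; s) = 1 + \int_0^c \left(\frac{s(h)}{z(\alpha)} - 1\right)\big(\phi(h-\gamma) + \phi(h+\gamma)\big)\,dh = 1 + R(s, \gamma),
\end{equation*}
which is the claimed identity \eqref{ExactFormulaRsGamma}.

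There is no substantive obstacle here: the argument is a short, routine manipulation of a Gaussian expectation. The only points needing a little care are (i) verifying that the contribution from $|h| \ge c$ is captured exactly by the constant term, which rests entirely on the defining property of the class ${\cal D}$, and (ii) the bookkeeping in the reflection step, where the sign inside $\phi$ must be tracked. If one prefers not to truncate explicitly, an equivalent route is to note that the integrand $\big(s(h) - z(\alpha)\big)\phi(h-\gamma)$ is supported on $[-c, c]$ from the outset, so the folding step can be applied to the full-line integral directly.
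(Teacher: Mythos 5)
Your proposal is correct and follows essentially the same route as the paper's proof in Appendix A.2: both reduce $e(\gamma;s)$ to $1+\int_{-c}^{c}\left(\tfrac{s(h)}{z(\alpha)}-1\right)\phi(h-\gamma)\,dh$ using $s(x)=z(\alpha)$ for $|x|\ge c$, and then fold $[-c,c]$ onto $[0,c]$ via $h\mapsto -h$ together with the evenness of $s$ and $\phi$. The only cosmetic difference is that you add and subtract $z(\alpha)$ inside the integrand, whereas the paper splits the line into three intervals and subtracts the identity $1=\int_{-\infty}^{\infty}\phi(h-\gamma)\,dh$; these are the same manipulation.
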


\noindent Obviously $e(\gamma; s)$ and $R(s, \gamma)$ are even functions of $\gamma$.

The function $R_1(s, \gamma)$ is the probability of non-coverage of $\theta$ by
${\rm CI}(s)$, for true parameter value $\gamma$. The function $R(s, \gamma)$ is the 
scaled expected length of ${\rm CI}(s)$ minus 1, for true parameter value $\gamma$.
Both $R_1(s, \gamma)$ and $R(s, \gamma)$ are risk functions in the following sense.
If both
$R_1(s_1, \gamma) < R_1(s_2, \gamma)$ and $R(s_1, \gamma) < R(s_2, \gamma)$
then ${\rm CI}(s_1)$ is preferred to ${\rm CI}(s_2)$, for true parameter value $\gamma$
(cf. \citeauthor{Kempthorne1987}, \citeyear{Kempthorne1987}, p.172).

\section{Application of the performance bound of \cite{KabailaKong2016}}

A direct answer to the question raised at the end of Section 2 is provided by
finding 
$\inf_{s \in {\cal D}} \;  e(0;s)$,
subject to the following two constraints:

\smallskip

\noindent (C1) Coverage constraint \newline
\indent The coverage constraint is
$c(\gamma; s, \rho) \ge 1 - \alpha$ for all $\gamma \ge 0$. 
We have used here the fact \newline 
\indent that $c(\gamma; s, \rho)$ is an even function of $\gamma$.

\smallskip

\noindent (C2) Maximum Scaled Expected Length constraint \newline
\indent For given $u > 0$, $e(\gamma; s) \le 1 + u$ for all $\gamma \ge 0$. 
We have used here the fact that $e(\gamma; s)$ \newline 
\indent  is an even function of $\gamma$.

\smallskip

\noindent We answer the question raised at the end of Section 2 indirectly by finding a lower bound
on $\inf_{s \in {\cal D}} \;  e(0;s)$, subject to these two constraints. If this lower bound is greater than 1 then the answer to this question is no.

To apply the performance bound of \cite{KabailaKong2016}, we need to express $e(0; s)$ and the constraints (C1) and (C2) in terms of 
risks and integrated risks. 
Since $e(\gamma; s)  = 1 + R(s, \gamma)$, 
\begin{equation}
\label{SEL_Rzero}
\quad \text{Subject to the constraints (C1)and (C2),} \ \
\displaystyle{\inf_{s \in {\cal D}}} \;  e(0;s)
= 1 + \displaystyle{\inf_{s \in {\cal D}}} \; R(s, 0).
\end{equation}
Note that
\begin{equation*}
R(s, 0) 
= 2\int_{0}^{c} \left( \frac{s(h)}{z(\alpha)} - 1 \right)\, \phi(h)\, dh
= \int_{-\infty}^{\infty} R(s, \gamma) \, d \, \pi(\gamma),
\end{equation*}
where $\pi$ is the cumulative distribution function of the distribution with a unit probability point mass at 0, i.e.
$\pi(x) = 1$ for $x \ge 0$ and $\pi(x) = 0$ for $x < 0$.
%
%
Thus $R(s, 0)$ is an integrated risk. 
To bring the notation in line with that used by \cite{KabailaKong2016},
let $R_2(s, \gamma) \equiv R(s, \gamma)$.
We now express the two constraints as follows.

\smallskip

\noindent (C1) Coverage constraint \newline
\indent The coverage constraint is
$R_1(s, \gamma) \leq 0$ for all $\gamma \ge 0$.

\smallskip

\noindent (C2) Maximum Scaled Expected Length constraint \newline
\indent For given $u > 0$, $R_2(s, \gamma)\leq u$ for all $\gamma \ge 0$.

\smallskip

We now apply Theorem 2(a) and the method described in Appendix C of \cite{KabailaKong2016}. 
Let $m_1$ and $m_2$ be given positive integers. Suppose that $\gamma_1(1), \dots, \gamma_1(m_1)$ and $\gamma_2(1), \dots, \gamma_2(m_2)$ satisfy
\begin{align}
\label{comp_ineq1}
\begin{split}
&0 \le \gamma_1(1) < \gamma_1(2) < \dots < \gamma_1(m_1)
\\
&0 < \gamma_2(1) < \gamma_2(2) < \dots < \gamma_2(m_2).
\end{split}
\end{align} 
Introduce the nonnegative variables $\nu_1(1), \dots, \nu_1(m_1)$ and $\nu_2(1), \dots, \nu_2(m_2)$ and let
\begin{align*}
\bm{\gamma} &= \big( \gamma_1(1), \dots, \gamma_1(m_1), \gamma_2(1), \dots, \gamma_2(m_2) \big)
\\
\boldsymbol\nu &= \big( \nu_1(1), \dots, \nu_1(m_1), \nu_2(1), \dots, \nu_2(m_2) \big).
\end{align*}
A given value of $(\bm{\gamma},\boldsymbol\nu)$ corresponds to the following two prior distributions:

\begin{enumerate}
	
	\item[(1)] 
	
	A discrete prior distribution that consists of $m_1$ probability point masses
	$p_1(1), \dots,p_1(m_1)$ located at $\gamma_1(1), \dots ,\gamma_1(m_1)$
	respectively, where
	\begin{equation*}
	p_1(j) = \dfrac{\nu_1(j)}{\sum_{k=1}^{m_1} \nu_1(k)}
	\qquad (j=1, \dots, m_1).
	\end{equation*}

		\item[(2)] 
	
	A discrete prior distribution that consists of $m_2$ probability point masses
	$p_2(1), \dots,p_2(m_2)$ located at $\gamma_2(1), \dots ,\gamma_2(m_2)$
	respectively, where
	\begin{equation*}
	p_2(j) = \dfrac{\nu_2(j)}{\sum_{k=1}^{m_2} \nu_2(k)}
	\qquad (j=1, \dots, m_2).
	\end{equation*}

\end{enumerate}

Let
\begin{align*}
\widetilde{g}(s, \bm{\gamma}, \boldsymbol\nu) = \int_{-\infty}^{\infty} R(s, \gamma)\, d\, \pi (\gamma) 
+ \sum_{j=1}^{m_1} \nu_1(j)\, R_1(s, \gamma_1(j)) 
+ \sum_{j=1}^{m_2} \nu_2(j)\, R_2(s, \gamma_2(j)). 
\end{align*}
Now let $s(\bm{\gamma}, \bm{\nu})$ denote a value of $s \in {\cal D}$ that minimizes $\widetilde{g}(s, \bm{\gamma}, \boldsymbol\nu)$. It follows from 
Theorem 2(a) and the method described in Appendix C of \cite{KabailaKong2016} that
a lower bound for $\inf_{s \in {\cal D}}\; R(s, 0)$,
 subject to the constraints (C1) and (C2),
is 
\begin{equation}
\label{lowerboundfinal}
\widetilde{g}\big(s(\bm{\gamma}, \boldsymbol{\nu}), \bm{\gamma}, \boldsymbol\nu \big) - \sum_{j=1}^{m_2} \nu_2(j) \, u,
\end{equation}  
where $u$ is the given positive number
in the description of the constraint (C2). 
As proposed by \cite{KabailaKong2016}, the lower bound
\eqref{lowerboundfinal} is numerically maximized with respect to $(\bm{\gamma},\boldsymbol\nu)$,
to tighten this lower bound. This amounts to computing 
two unfavorable discrete prior distributions.
We then use \eqref{SEL_Rzero} to obtain the desired lower bound.

The crucial part of this procedure is the computation of $s(\bm{\gamma}, \bm{\nu})$, a value of $s \in {\cal D}$ that minimizes $\widetilde{g}(s, \bm{\gamma}, \boldsymbol\nu)$, for given $(\bm{\gamma},\boldsymbol\nu)$. This computation, which requires some care, is described in the next section.

\section{Computation of $s(\bm{\gamma}, \bm{\nu})$ for given $(\bm{\gamma},\boldsymbol\nu)$}

Throughout this section we suppose that $(\bm{\gamma},\boldsymbol\nu)$ is given. 
We describe the computation of $s(\bm{\gamma}, \bm{\nu})$, a value of $s \in {\cal D}$ that minimizes $\widetilde{g}(s, \bm{\gamma}, \boldsymbol\nu)$.
Straightforward manipulations show that
\begin{equation}
\label{IntegralForGiven_s}
\widetilde{g}\big(s, \bm{\gamma}, \boldsymbol\nu \big) 
= \int_{0}^{c}  q \big(s(h); h, \bm{\gamma}, \boldsymbol\nu \big) \, dh, 
\end{equation}
where $q(x; h, \bm{\gamma}, \boldsymbol{\nu})$ is defined to be 
\begin{align*}
& \left( \frac{x}{z(\alpha)} - 1 \right)\, \Big( 2\phi(h) + \sum_{j=1}^{m_2} \nu_2(j)\, \big( \phi(h-\gamma_2(j)) + \phi(h+\gamma_2(j)) \big) \Big)\\
&\qquad + \sum_{j=1}^{m_1} \nu_1(j) \Big[ \left( \ell^{\dag}(h, \gamma_1(j)) - \ell(h, \gamma_1(j); x )\right) \, \phi(h-\gamma_1(j)) \\ 
&\qquad \qquad \qquad \qquad +  \left( \ell^{\dag}(-h, \gamma_1(j)) - \ell(-h, \gamma_1(j); x ) \right) \, \phi(h+\gamma_1(j))\, \Big].
\end{align*}
It follows from \eqref{IntegralForGiven_s} that a function
 $s(\bm{\gamma}, \boldsymbol{\nu})$, defined as a minimizer of 
$\widetilde{g}(s, \bm{\gamma}, \boldsymbol\nu)$ over $s \in {\cal D}$, may be found as follows. We set $s(\bm{\gamma}, \boldsymbol{\nu})$, evaluated at any $h \in [0, c]$, to be a minimizer over $x\in[0, \infty)$ of $q(x; h, \bm{\gamma}, \boldsymbol{\nu})$.

Now $q(x; h, \bm{\gamma}, \boldsymbol{\nu})$ is a continuous function of $x \in [0, \infty)$ for 
all $h \in [0,c]$ and every given $(\bm{\gamma}, \boldsymbol{\nu})$.
 An examination of some examples of this function of $x \in [0, \infty)$ 
 show that this function may have several local minima, including the possibility of a local minimum at $x = 0$. 
 Consequently, the value of $x \in [0, \infty)$ that minimizes $q(x; h, \bm{\gamma}, \boldsymbol{\nu})$ may change discontinuously, as $h$ increases.
 In other words, the function $s(\bm{\gamma}, \boldsymbol{\nu})$ of $h$
 may have discontinuities. 
 Figure 5 of the Supplementary Material provides some illustrations of functions 
 $q(x; h, \bm{\gamma}, \boldsymbol{\nu})$ of $x\in[0, \infty)$ that have two local minima.
 Figure 4 of the Supplementary Material provides an illustration of a function $s(\bm{\gamma}, \boldsymbol{\nu})$ of $h$ with discontinuities.

 To evaluate the lower bound \eqref{lowerboundfinal}, we need to evaluate 
 \begin{equation}
 \label{IntegralFinal}
 \widetilde{g}\big(s(\bm{\gamma}, \boldsymbol{\nu}), \bm{\gamma}, \boldsymbol\nu \big) 
 = \int_{0}^{c}  q \Big(s(\bm{\gamma}, \boldsymbol{\nu})\text{ evaluated at }h; h, \bm{\gamma}, \boldsymbol\nu \Big) \, dh. 
 \end{equation}
 Although the function $s(\bm{\gamma}, \boldsymbol{\nu})$ of $h$
 may have discontinuities,
the integrand of the integral on the right-hand side of \eqref{IntegralFinal} is a continuous function of $h \in [0, c]$.
An illustration of this, when the function $s(\bm{\gamma}, \boldsymbol{\nu})$ of $h$
has discontinuities, is provided by Figure 3 of the Supplementary Material.  

To carry out the computation of the function $s(\bm{\gamma}, \bm{\nu})$
accurately and effectively, we use the properties of 
$dq(x; h, \bm{\gamma}, \boldsymbol{\nu})/dx$,
considered as a function of $x$, described in Appendix A.3.
Suppose that $h \in [0, c]$ is given. Theorem 3 of Appendix A.3 leads to the procedure described at the end of this appendix for finding an interval $\big[0, \widetilde{x}\big]$
that must contain a value of $x \ge 0$ that minimizes $q(x; h, \bm{\gamma}, \boldsymbol{\nu})$. 

 We use the following two step procedure to find the value of $x \in [0, \widetilde{x}]$ that minimizes $q(x; h, \bm{\gamma}, \boldsymbol{\nu})$. We find all possible local minima in Step 1 and compare them to find the global minimum in Step 2.

 \smallskip
 \noindent {\bf Step 1}: 
 By considering $dq(x; h, \bm{\gamma}, \boldsymbol{\nu})/dx$, find all the local minimizers of $q(x; h, \bm{\gamma}, \boldsymbol{\nu})$ in the interval $[0, \widetilde{x}]$.
 Define $w$ to be the smallest integer that is greater than or equal to 
 $10 \, \widetilde{x}$. We evaluate $dq(x; h, \bm{\gamma}, \boldsymbol{\nu})/dx$
 on the evenly-spaced grid $x_1=0, x_2=0.1, x_3=0.2, \dots, x_w$ of values of $x$.
 To find the values of $x \in [0, x_w]$ that are local minimizers of $q(x; h, \gamma, \nu)$, we need to consider the following two cases.
 
 \noindent \underline{Case 1: $x = 0$}
 
 \noindent $x=0$ is a local minimizer of $q(x; h, \bm{\gamma}, \boldsymbol{\nu})$ if either 
$dq(0; h, \bm{\gamma}, \boldsymbol{\nu})/dx > 0$
 or
 $dq(0; h, \bm{\gamma}, \boldsymbol{\nu})/dx = 0$ and
 $dq(x_2; h, \bm{\gamma}, \boldsymbol{\nu})/dx > 0$;
 otherwise $x=0$ is not a local minimizer.
 
 \noindent \underline{Case 2: $0 < x < x_w$}
 
 \noindent If $dq(x_i; h, \bm{\gamma}, \boldsymbol{\nu})/dx < 0$ and 
 $dq(x_{i+1}; h, \bm{\gamma}, \boldsymbol{\nu})/dx > 0$, then 
 $dq(x; h, \bm{\gamma}, \boldsymbol{\nu})/dx$ has a zero in the interval $[x_i, x_{i+1}]$
 that is a local minimizer of $q(x; h, \bm{\gamma}, \boldsymbol{\nu})$. We find this zero using the \texttt{R} function \texttt{uniroot}, to which we provide the interval $[x_i, x_{i+1}]$.  
 Also, if 
$dq(x_i; h, \bm{\gamma}, \boldsymbol{\nu})/dx = 0$
and $dq(x_{i-1}; h, \bm{\gamma}, \boldsymbol{\nu})/dx < 0$ and 
$dq(x_{i+1}; h, \bm{\gamma}, \boldsymbol{\nu})/dx > 0$ then 
$x_i$ is a zero of 
$dq(x; h, \bm{\gamma}, \boldsymbol{\nu})/dx$ that is a local minimizer of $q(x; h, \bm{\gamma}, \boldsymbol{\nu})$.

 \medskip
 \noindent {\bf Step 2}: Evaluate $q(x; h, \bm{\gamma}, \boldsymbol{\nu})$ at the local minimizers of $q(x; h, \bm{\gamma}, \boldsymbol{\nu})$ found in Step 1. The
 global minimum of $q(x; h, \bm{\gamma}, \boldsymbol{\nu})$ is simply the minimum of the local minima. 
 
\section{\textbf{Numerical results}}

As noted in Section 3, for any given $\gamma$ and $s \in {\cal D}$,  $R_1(s, \gamma)$ is an even function of $\rho$. Since $R(s, \gamma)$ does not depend on $\rho$, it is therefore sufficient to consider $\rho \in [0, 1)$. For any given size
$\widetilde{\alpha}$ of the preliminary test, desired minimum coverage 
$1 - \alpha$ and $\rho \in [0, 1)$, we proceed as follows.

 Let $Q(m_1, m_2)$
denote the set of possible values of $(\bm{\gamma},\boldsymbol\nu)$, for positive integers
$m_1$ and $m_2$. Also let
${\rm LB}(u; m_1, m_2, \bm{\gamma},\boldsymbol\nu)$ denote the lower bound
\eqref{lowerboundfinal}. We will make use of the following easily-proved, but very useful result.

\begin{theorem}
	For any given positive integers
	$m_1$ and $m_2$ and $(\bm{\gamma},\boldsymbol\nu) \in Q(m_1, m_2)$
	such that $\sum_{j=1}^{m_2} \nu_2(j) > 0$ the following result is true.
	The lower bound ${\rm LB}(u; m_1, m_2, \bm{\gamma},\boldsymbol\nu)$
	is a decreasing function of $u \in (0, \infty)$.
\end{theorem}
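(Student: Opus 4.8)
The plan is to read the claim straight off the explicit formula \eqref{lowerboundfinal}. Recall that
\begin{equation*}
{\rm LB}(u; m_1, m_2, \bm{\gamma},\boldsymbol\nu) = \widetilde{g}\big(s(\bm{\gamma}, \boldsymbol{\nu}), \bm{\gamma}, \boldsymbol\nu \big) - \Big(\textstyle\sum_{j=1}^{m_2} \nu_2(j)\Big)\, u .
\end{equation*}
The first step is to observe that $\widetilde{g}\big(s(\bm{\gamma}, \boldsymbol{\nu}), \bm{\gamma}, \boldsymbol\nu \big)$ does not depend on $u$. Inspecting the definition of $\widetilde{g}(s, \bm{\gamma}, \boldsymbol\nu)$ in Section 4, it is assembled only from $R(\cdot,\cdot)$, $R_1(\cdot,\cdot)$, $R_2(\cdot,\cdot)$, the degenerate prior $\pi$, and the fixed weights $\boldsymbol\nu$ — none of which involves $u$. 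Moreover $s(\bm{\gamma}, \boldsymbol{\nu})$ is, by definition, a minimizer over $s \in {\cal D}$ of exactly this $u$-free functional, so $s(\bm{\gamma}, \boldsymbol{\nu})$ is itself independent of $u$, and hence so is $\widetilde{g}\big(s(\bm{\gamma}, \boldsymbol{\nu}), \bm{\gamma}, \boldsymbol\nu \big)$. Call this constant $C$.

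The second step is then immediate: with $A := \sum_{j=1}^{m_2} \nu_2(j)$ we have ${\rm LB}(u; m_1, m_2, \bm{\gamma},\boldsymbol\nu) = C - A u$, an affine function of $u$. Since the hypothesis $\sum_{j=1}^{m_2} \nu_2(j) > 0$ says precisely that $A > 0$, the map $u \mapsto C - A u$ is strictly decreasing on $(0,\infty)$, which is the assertion of the theorem. (One could add, if desired, that it is in fact affine with slope $-A$ and hence has range all of $\mathbb{R}$, but only monotonicity is needed.)

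There is essentially no obstacle here; the single point requiring care is the bookkeeping claim that $u$ enters the construction of the lower bound \emph{only} through the explicitly subtracted term in \eqref{lowerboundfinal}, and in particular does not enter the optimization problem defining $s(\bm{\gamma}, \boldsymbol{\nu})$. Once this is verified against the definitions in Section 4 — as it should be, since the constraint level $u$ appears in Theorem 2(a) of \cite{KabailaKong2016} only in the Lagrangian-type correction $-\sum_{j} \nu_2(j)\,u$, not in the Bayes problem that produces the least favourable prior's optimal rule — the monotonicity follows with no further work.
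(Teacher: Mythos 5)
Your proof is correct and follows essentially the same route as the paper: the key observation in both is that $s(\bm{\gamma}, \bm{\nu})$, and hence $\widetilde{g}\big(s(\bm{\gamma}, \boldsymbol{\nu}), \bm{\gamma}, \boldsymbol\nu \big)$, does not depend on $u$, so that \eqref{lowerboundfinal} is affine in $u$ with negative slope $-\sum_{j=1}^{m_2}\nu_2(j)$. You merely spell out in more detail the bookkeeping that the paper leaves implicit.
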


\begin{proof}
	Suppose that the positive integers
	$m_1$ and $m_2$ and $(\bm{\gamma},\boldsymbol\nu) \in Q(m_1, m_2)$,
	satisfying $\sum_{j=1}^{m_2} \nu_2(j) > 0$, are given. The function 
	$s(\bm{\gamma}, \bm{\nu})$ does not depend on $u > 0$. 
	The result now follows from the expression \eqref{lowerboundfinal}.
\end{proof}

Using the procedure described in Section S3 of the Supplementary Material, we find
`good values' of the positive integers $m_1^*$ and $m_2^*$, $u^* > 0$
and  $(\bm{\gamma}^*,\boldsymbol\nu^*) \in Q(m_1^*, m_2^*)$.
Now define $u^{**}$ to be the solution for $u$ of 
\begin{equation*}
1 + \widetilde{g}\big(s(\bm{\gamma}^*, \boldsymbol{\nu}^*), \bm{\gamma}^*, \boldsymbol\nu^* \big) - \sum_{j=1}^{m_2} \nu_2^*(j) \, u = 1.005.
\end{equation*}  
In other words, 
\begin{equation*}
u^{**}
= \dfrac{\widetilde{g}\big(s(\bm{\gamma}^*, \boldsymbol{\nu}^*), \bm{\gamma}^*, \boldsymbol\nu^* \big) - 0.005}
{\sum_{j=1}^{m_2} \nu_2^*(j)}.
\end{equation*}  
If $u^{**} > 0$ then Theorem 3 implies that the answer to the question stated at the end of Section 2 is `no' for all $u$ satisfying $0 < u \le u^{**}$. Consequently, if 
$u^{**} > 0$ then for all $u$ satisfying 
$0 < u \le u^{**}$ the answer to the question stated in the introduction is `no'.

Section S2 of the Supplementary Material describes the details of computations using the \texttt{R} programming language. In Section S4 of
the Supplementary Material  we provide a method to check the accuracy of the final computed results and describe this procedure using an example. Table \ref{BoundingResults} describes the values of $u^{**}$ computed using the procedure described in 
Section S3 of the Supplementary Material, with $\epsilon = 0.05$. All of the results are for nominal coverage $1-\alpha = 0.95$, $\widetilde{\alpha} \in \{ 0.05, 0.1 \}$ and $|\rho| \in \{ 0.5, 0.6, 0.7, 0.8 \}$. This table also lists the values of $m_1$ and $m_2$, the numbers of probability point masses in the discrete prior distributions (described in Section 4) that relate to the coverage probability and scaled expected length, respectively.

\begin{table}[h!]
	\centering 
	\caption{Values of $u^{**}$ such that 
		${\rm LB}(u; m_1, m_2, \bm{\gamma},\boldsymbol\nu) > 1$ for all $u$ satisfying $0 < u \le u^{**}$. These values were computed using the procedure described in 
		Section S3 of the Supplementary Material, with $\epsilon = 0.05$. We consider $\widetilde{\alpha} \in \{ 0.05, 0.1 \}$, $|\rho| \in \{ 0.5, 0.6, 0.7, 0.8 \}$ and nominal coverage $1-\alpha = 0.95$. Here $m_1$ and $m_2$ are the number of probability point masses in the two prior distributions described in Section 4.} 
	\vspace{2mm}
	\begin{tabular}{ p{2cm} p{2cm} p{2cm}  p{2cm}  p{3cm} } 
		\hline
		$\widetilde{\alpha}$ & $|\rho|$ & $m_1$ & $m_2$ &  $u^{**}$ \\ [0.5ex]
		\hline
		0.05 & 0.5 & 4 & 4 & 0.05268182 \\ 
		& 0.6 & 4 & 4 & 0.07535683 \\
		& 0.7 & 5 & 3 & 0.11375010 \\
		& 0.8 & 5 & 3 & 0.15037320 \\ [0.5ex] 
		\hline
		0.1 & 0.5 & 4 & 2 &  0.02763119 \\ 
		& 0.6 & 7 & 4 & 0.04430236  \\
		& 0.7 & 5 & 2 & 0.06345934 \\
		& 0.8 & 5 & 2 & 0.07750792 \\ [0.5ex] 
		\hline
	\end{tabular}
	\label{BoundingResults}
\end{table}

In view of the question that we have posed in the introduction and at the end of Section
2, it is reasonable to measure the performance of a confidence interval 
CI($s$) that satisfies constraints (C1) and (C2) as follows.
Define the squared scaled expected length gain (\textit{gain}) to be
\begin{equation*}
1 - \big( e(0; s) \big)^2
\end{equation*}
and the squared scaled expected length loss (\textit{loss}) to be
\begin{equation*}
(1 + u)^2 - 1 = u^2 + 2u.
\end{equation*}
Ideally, the \textit{gain} is large and the \textit{loss} is small.
If the positive number $\ell$ is a lower bound on $\inf_{s \in {\cal D}} e(0;s)$, subject to the constraints (C1) and (C2), then $1-\ell^2$ is an upper bound on the \textit{gain} $1 - \big(e(0;s)\big)^2$ for all $s \in {\cal D}$ such that constraints (C1) and (C2) are satisfied.
For the values of $u$ listed in Table \ref{GainLossResults}, we used the values of $m_1$ and $m_2$ in Table 1 that were used to compute $u^{**}$, to carry out Step 1 of the procedure described in 
Section S3 of the Supplementary Material. 
Table \ref{GainLossResults} gives the resulting upper bound on the \textit{gain} for \textit{loss} specified by the given value of $u > 0$. 
As can be seen from this table, 
the ratio (upper bound on \textit{gain})/\textit{loss} is small for all $|\rho| \ge 0.6$.

\begin{table}[h!]
	\centering
	\caption{Computed upper bound on the \textit{gain} for \textit{loss} specified by the given value of $u > 0$. These values were computed using the procedure described in 
		Section S3 of the Supplementary Material, with $\epsilon = 0.05$. We consider  $\widetilde{\alpha} \in \{ 0.05, 0.1 \}$, $|\rho| \in \{ 0.5, 0.6, 0.7, 0.8 \}$ and nominal coverage $1-\alpha = 0.95$. } 
	\vspace{2mm}
	\begin{tabular}{ p{1cm}  p{1cm} p{1.25cm} p{3cm} p{1.75cm} p{3cm}} 
		\hline
		$\widetilde{\alpha}$ & $|\rho|$ & $u$ & upper bound & \textit{loss} & (upper bound on \\
		&  &  & on \textit{gain} & & \textit{gain})/\textit{loss} \\ [0.5ex]
		\hline
		0.05 & 0.5 & 0.079 & 0.04948720 & 0.1642 & 0.3014 \\
		& & 0.105 & 0.07126956 & 0.2210 & 0.3225 \\ 
		& 0.6 & 0.113 & 0.02959263 & 0.2387 & 0.1239 \\
		& & 0.151 & 0.05424241 & 0.3248 & 0.1670 \\
		& 0.7 & 0.171 & 0.02867069 & 0.3712 & 0.0772 \\
		& & 0.228 & 0.05785821 & 0.5079 & 0.1139 \\
		& 0.8 & 0.226 & 0.03730076 & 0.5031 & 0.0741 \\ 
		& & 0.301 & 0.08313421 & 0.6926 & 0.1200 \\ [0.5ex] 
		\hline
		0.1 & 0.5 & 0.041 & 0.02654781 & 0.0837 & 0.3172 \\ 
		& &  0.055 & 0.05058620 & 0.1130 & 0.4477 \\
		& 0.6 & 0.066 & 0.02488160 & 0.1364 & 0.1824 \\
		& & 0.089 & 0.05302328 & 0.1859 & 0.2852 \\
		& 0.7 & 0.095 & 0.02606301 & 0.1990 & 0.1309 \\
		& & 0.127 & 0.05599555 & 0.2701 & 0.2073 \\
		& 0.8 & 0.117 & 0.03431617 & 0.2477 & 0.1385 \\ 
		& & 0.156 & 0.05958476 & 0.3363 & 0.1772 \\ [0.5ex] 
		\hline
	\end{tabular}
	\label{GainLossResults}
\end{table}

For given nominal coverage $1-\alpha$, size $\widetilde{\alpha}$ of the preliminary test, $|\rho|$, $u$ $m_1$ and $m_2$, it takes roughly 45 minutes to compute the lower bound on $\inf_{s \in {\cal D}} e(0;s)$, subject to the constraints (C1) and (C2), using a computer with i7 processor (3.4 GHz) and 32 GB of RAM. In other words, the time taken to complete Step 1 of the procedure described in Section S3 of the Supplementary Material is roughly 45 minutes. As a consequence, the time taken to complete the entire algorithm described in  Section S3 of the Supplementary Material for given nominal coverage $1-\alpha$, size $\widetilde{\alpha}$ of the preliminary test, $|\rho|$ and $u$ is about 4 hours. The subsequent computation of $u^{**}$ for the given values of nominal coverage $1-\alpha$, size $\widetilde{\alpha}$ of the preliminary test and $|\rho|$, takes an additional minute or so.

\section{\textbf{Remarks}}

\noindent \textsl{\textbf{Remark 7.1}}

\smallskip

\noindent It is reasonable to ask whether or not we have, in posing the question described in the introduction, put forward requirements that are excessively restrictive in the sense that these requirements cannot be achieved by any confidence interval whatsoever. 
In other words, is it possible that, for the testbed scenario of two nested linear regression models and known error variance $\sigma^2$,
there does not exist any confidence interval (whose centre is not necessarily the bootstrap smoothed estimator) for which the answer to the question posed in the introduction is `yes'?

We know that the requirements that we have put forward are not excessively restrictive because, as shown by \cite{KabailaGiri2009a}, \cite{KabailaGiri2013}
and \cite{MainzerKabaila2019}, it is possible to compute formulas for the centre and width of the confidence interval so that
this interval has the attractive features (A1) and (A2), the desired minimum coverage probability $1 - \alpha$ and 
scaled expected length that (a) has a maximum value that is not too much larger than 1 and (b) is substantially less than 1 when the simpler model is correct.
Indeed, the \texttt{R} package \texttt{ciuupi}, described by \cite{MainzerKabaila2019}, computes confidence intervals that have the attractive features (A1) and (A2), the desired minimum coverage probability $1 - \alpha$ and for which the \textit{gain} is set equal to the \textit{loss}, where \textit{gain} and \textit{loss} are as defined in Section 6. 

\medskip

\noindent \textsl{\textbf{Remark 7.2}}

\smallskip

\noindent 
In the present paper we have considered the testbed scenario of two nested linear regression models for known error variance $\sigma^2$. A natural question to ask is the following. Do we obtain similar results when, instead, $\sigma^2$ is unknown, so that it must be estimated from the data?

It is highly plausible that, for two nested linear regression models with $p$ fixed, the following is true. 
The results obtained for the case that the error variance $\sigma^2$ is known provide an excellent approximation to the corresponding results obtained for the case that $\sigma^2$ is unknown and the residual degrees of freedom $n - p$ is moderately large. Results that support this claim are provided in Appendix B of \cite{KabailaWijethunga2019a} and in Section 3 of \cite{KabailaWijethunga2019b}, who consider the case that $\sigma^2$ is unknown.
Therefore the results of the present paper suggest that for two nested linear regression models and $\sigma^2$ unknown the answer to the question posed in the introduction will be `no', provided that $n - p$ is sufficiently large.
However, by construction, the results of  \cite{KabailaWijethunga2019b} suggest that
the answer to this question will be `yes', when $n - p$ is small.

\section{\textbf{Conclusion}}

In Section 6 we consider a confidence interval centred on the bootstrap smoothed estimator, for preliminary test size either 0.05 or 0.1. This confidence interval is required to have (C1) coverage probability that never falls below 0.95 and (C2) scaled expected length that never exceeds
$1 + u$, for given $u > 0$. 
Table 1 gives values of $u^{**}$ such that for all $u$ satisfying $0 < u \le u^{**}$, the scaled expected length of this confidence interval must exceed 1 when the simpler model is correct. 
In other words, this table specifies values of $u > 0$ such that it is impossible to find a formula for the width of this confidence interval such that its scaled expected length is less than 1 when the simpler model is correct. 

In Section 6 we also define a \textit{gain} and \textit{loss} for this confidence interval. Table 1 may be viewed as giving values of the \textit{loss} such that it is impossible for this confidence interval to have any \textit{gain}.
As Table 2 shows, even for the listed values of $u > u^{**}$,
the \textit{gain} cannot be more than a small fraction of the \textit{loss} when $|\rho| \ge 0.6$.

\bigskip

\noindent \textbf{\large Acknowledgements}

\medskip

\noindent This work was supported by an Australian Government Research Training Program Scholarship.


\appendix

\section*{Appendix}


We will express all quantities of interest in terms of the random vector $(\widehat{\theta}, \widehat{\gamma})$, which has the following bivariate normal distribution:
\begin{align}
\label{bivariate_theta_gamma}
\left[ {\begin{array}{c}
	\widehat{\theta} \\
	\widehat\gamma
	\end{array} } \right] \sim N \left(
\left[ {\begin{array}{c}
	\theta \\
	\gamma
	\end{array} } \right],
\left[ {\begin{array}{cc}
	\sigma^2 \, v_\theta & \rho \, \sigma\, {v_\theta}^{1/2} \\
	\rho \, \sigma \, {v_\theta}^{1/2} & 1
	\end{array} } \right]
\right).
\end{align}

\subsection*{A.1 Proof of Theorem 1}

The following proof is based, in part, on the derivations described in Section 4.3 of \cite{Giri2008}.
The coverage probability of the confidence interval $\text{CI}(s)$ is
\begin{align*}
\notag
P(\theta \in \text{CI}(s)) 
&= P\left( \widehat{\theta} - \sigma \, v_\theta^{1/2}\, b(\widehat{\gamma}) - \sigma \, v_\theta^{1/2}\, s(\widehat{\gamma}) \leq \theta \leq \widehat{\theta} - \sigma \, v_\theta^{1/2}\, b(\widehat{\gamma}) + \sigma \, v_\theta^{1/2}\, s(\widehat{\gamma}) \right)\\
\notag
&= P\left( -\widehat{\theta} + \sigma \, v_\theta^{1/2}\, b(\widehat{\gamma}) + \sigma \, v_\theta^{1/2}\, s(\widehat{\gamma}) \geq -\theta \geq -\widehat{\theta} + \sigma \, v_\theta^{1/2}\, b(\widehat{\gamma}) - \sigma \, v_\theta^{1/2}\, s(\widehat{\gamma}) \right)\\
\notag
&= P\left( \sigma \, v_\theta^{1/2} \big(b(\widehat{\gamma}) - s(\widehat{\gamma})\big) \leq \widehat{\theta} - \theta \leq \sigma \, v_\theta^{1/2} \big(b(\widehat{\gamma}) + s(\widehat{\gamma})\big) \right)\\
\notag
&= P\left(  b(\widehat{\gamma}) - s(\widehat{\gamma}) \leq \frac{\widehat{\theta} - \theta}{\sigma \, v_\theta^{1/2}} \leq  b(\widehat{\gamma}) + s(\widehat{\gamma}) \right)\\
&= P \big(  b(\widehat{\gamma}) - s(\widehat{\gamma}) \leq G \leq  b(\widehat{\gamma}) + s(\widehat{\gamma}) \big),
\end{align*}
where $G = (\widehat{\theta} - \theta)/\big(\sigma \,{v_\theta^{1/2}} \big)$.
It follows from \eqref{bivariate_theta_gamma} that
\begin{align}
\label{bivariate_G_gamma}
\left[ {\begin{array}{c}
	G \\
	\widehat\gamma
	\end{array} } \right] \sim N \left(
\left[ {\begin{array}{c}
	0 \\
	\gamma
	\end{array} } \right],
\left[ {\begin{array}{cc}
	1 & \rho  \\
	\rho  & 1
	\end{array} } \right]
\right).
\end{align}
For given $\rho$ and function $s$, the coverage probability of $\text{CI}(s)$ is a function of 
$\gamma$. We denote this coverage probability by $c(\gamma; s, \rho)$.

Since $b$ and $s$ are odd and even functions, respectively,
\begin{align*}
c(\gamma; s, \rho)
&= P \big(  -b(-\widehat{\gamma}) - s(-\widehat{\gamma}) \leq G \leq  -b(-\widehat{\gamma}) + s(-\widehat{\gamma}) \big)
\\
&= P \big(  b(-\widehat{\gamma}) - s(-\widehat{\gamma}) \leq -G \leq  b(-\widehat{\gamma}) + s(-\widehat{\gamma}) \big)
\\
&= P \big(  b(\widehat{\gamma}^{\prime}) - s(\widehat{\gamma}^{\prime}) \leq G^{\prime} \leq  b(\widehat{\gamma}^{\prime}) + s(\widehat{\gamma}^{\prime}) \big),
\end{align*}
where $G^{\prime} = -G$ and $\widehat{\gamma}^{\prime} = - \widehat{\gamma}$. It follows from \eqref{bivariate_G_gamma} that 
\begin{align*}
\left[ {\begin{array}{c}
	G^{\prime} \\
	\widehat{\gamma}^{\prime}
	\end{array} } \right] \sim N \left(
\left[ {\begin{array}{c}
	0 \\
	-\gamma
	\end{array} } \right],
\left[ {\begin{array}{cc}
	1 & \rho  \\
	\rho  & 1
	\end{array} } \right]
\right).
\end{align*}
Hence $c(\gamma; s, \rho) = c(-\gamma; s, \rho)$.

Since $b(x) = \rho \, k(x)$,
\begin{align*}
c(\gamma; s, \rho)
&= P \big(  \rho \, k(\widehat{\gamma}) - s(\widehat{\gamma}) \leq G \leq  \rho \, k(\widehat{\gamma}) + s(\widehat{\gamma}) \big)
\\
&=  P \big( - \rho \, k(\widehat{\gamma}) - s(\widehat{\gamma}) 
\leq - G 
\leq  - \rho \, k(\widehat{\gamma}) + s(\widehat{\gamma}) \big)
\\
&= P \big( (- \rho) \, k(\widehat{\gamma}) - s(\widehat{\gamma}) 
\leq G^{\prime} 
\leq  (- \rho) \, k(\widehat{\gamma}) + s(\widehat{\gamma}) \big),
\end{align*}
where $G^{\prime} = -G$. It follows from \eqref{bivariate_G_gamma} that 
\begin{align*}
\left[ {\begin{array}{c}
	G^{\prime} \\
	\widehat{\gamma}
	\end{array} } \right] \sim N \left(
\left[ {\begin{array}{c}
	0 \\
	\gamma
	\end{array} } \right],
\left[ {\begin{array}{cc}
	1 & - \rho  \\
	- \rho  & 1
	\end{array} } \right]
\right).
\end{align*}
Hence $c(\gamma; s, \rho) = c(\gamma; s, -\rho)$.

 It follows from \eqref{bivariate_G_gamma} that  the probability distribution of $G$, conditional on $\widehat{\gamma}=h$, is $N\big(\rho(h-\gamma), 1-\rho^2 \big)$. Note that
\begin{align*}
&P \big(  b(\widehat{\gamma}) - s(\widehat{\gamma}) \leq G \leq  b(\widehat{\gamma}) + s(\widehat{\gamma}) \big) 
\\
&= \int_{-\infty}^{\infty} P \big(  b(h) - s(h) \leq G \leq  b(h) + s(h) \big| \widehat{\gamma}=h \big) \, \phi(h-\gamma)\, dh
\\
&= \int_{-\infty}^{\infty} P \big(  b(h) - s(h) \leq \widetilde{G} \leq  b(h) + s(h) \big) \, \phi(h-\gamma)\, dh,
\end{align*}
where $\widetilde{G} \sim N\big(\rho(h-\gamma), 1-\rho^2 \big)$. Thus
\begin{align}
\notag
&c(\gamma; s, \rho)
\\
\notag
&= \int_{-\infty}^{\infty} \ell(h, \gamma; s(h)) \, \phi(h-\gamma)\, dh
\\
\notag
 &= \int_{-\infty}^{-c} \ell(h, \gamma; s(h)) \, \phi(h-\gamma)\, dh + \int_{-c}^{c} \ell(h, \gamma; s(h)) \, \phi(h-\gamma)\, dh + \int_{c}^{\infty} \ell(h, \gamma; s(h)) \, \phi(h-\gamma)\, dh \\
\label{CP_FirstExpression}
&= \int_{-\infty}^{-c} \ell^{\dag}(h, \gamma) \, \phi(h-\gamma)\, dh + \int_{-c}^{c} \ell(h, \gamma; s(h)) \, \phi(h-\gamma)\, dh + \int_{c}^{\infty} \ell^{\dag}(h, \gamma) \, \phi(h-\gamma)\, dh. 
\end{align}
The usual $1-\alpha$ confidence interval based on the full model ${\cal M}_2$ has coverage probability $1-\alpha$. Thus
\begin{align*}
1 - \alpha 
&=  P \big( -z(\alpha) \leq G \leq z(\alpha) \big) 
\\
&= \int_{-\infty}^{\infty} P\big( -z(\alpha) \leq G \leq z(\alpha) \, \big| \, \widehat{\gamma}=h \big) \, \phi(h-\gamma) \, dh
\\
&= \int_{-\infty}^{\infty} \ell^\dag(h, \gamma)\, \phi(h-\gamma) \, dh.
\end{align*}
Therefore
\begin{equation*}
1-\alpha = \int_{-\infty}^{-c} \ell^{\dag}(h, \gamma) \, \phi(h-\gamma)\, dh + \int_{-c}^{c} \ell^{\dag}(h, \gamma) \, \phi(h-\gamma)\, dh + \int_{c}^{\infty} \ell^{\dag}(h, \gamma) \, \phi(h-\gamma)\, dh. 
\end{equation*}
It follows from this equality and \eqref{CP_FirstExpression} that
\begin{align*}
c(\gamma; s, \rho)
&= 1 - \alpha + \int_{-c}^{c} \big( \ell(h, \gamma; s(h)) - \ell^{\dag}(h, \gamma) \big) \, \phi(h-\gamma)\, dh
\\
& = 1 - \alpha - 
\Big( \int_{0}^{c} \left( \ell(h, \gamma; s(h)) - \ell^{\dag}(h, \gamma) \right) \, \phi(h-\gamma)\, dh 
\\
&\qquad  \qquad \qquad + \int_{-c}^{0} \left( \ell(h, \gamma; s(h)) - \ell^{\dag}(h, \gamma) \right) \, \phi(h-\gamma)\, dh \Big).
\end{align*}
Change the variable of integration to $y=-h$ in the second integral. The result 
$c(\gamma; s, \rho) = 1 - \alpha - R_1(s, \gamma)$
now follows from the fact that both $s$ and $\phi$ are even functions.

\hfill $\qed$

\subsection*{A.2 Proof of Theorem 2}

The following proof is based, in part, on the derivations described in Section 4.3 of \cite{Giri2008}.
Note that 
\begin{align}
\notag
e(\gamma; s) &=  \frac{1}{z(\alpha)} \int_{-\infty}^{\infty} s(h) \, \phi(h-\gamma) \, dh
\\
\label{SEL_First}
&= \int_{-\infty}^{-c} \phi(h-\gamma)\, dh + \frac{1}{z(\alpha)}
\int_{-c}^{c} s(h) \, \phi(h-\gamma)\, dh + \int_{c}^{\infty}  \phi(h-\gamma) \, dh,
\end{align}
since $s(x)=z(\alpha)$ for all $|x| \geq c$.
Obviously,
\begin{equation*}
1 = \int_{-\infty}^{-c}  \phi(h-\gamma)\, dh + 
\int_{-c}^{c}  \phi(h-\gamma)\, dh + \int_{c}^{\infty} \phi(h-\gamma)\, dh. 
\end{equation*}
It follows from this equality and \eqref{SEL_First} that
\begin{align*}
e(\gamma; s) &= 1 + \int_{-c}^{c} \left( \frac{s(h)}{z(\alpha)} - 1 \right) \phi(h-\gamma)\, dh
\\
&= 1 + \int_{-c}^{0} \left( \frac{s(h)}{z(\alpha)} - 1 \right) \phi(h-\gamma)\, dh + \int_{0}^{c} \left( \frac{s(h)}{z(\alpha)} - 1 \right) \phi(h-\gamma)\, dh.
\end{align*}
Change the variable of integration to $y=-h$ in the first integral on the right-hand side. The 
fact that both $s$ and $\phi$ are even functions implies that
\eqref{ExactFormulaRsGamma} is true.

\hfill $\qed$

\subsection*{A.3 Properties of $dq(x; h, \bm{\gamma}, \boldsymbol{\nu})/dx$
considered as a function of $x$}

It is straightforward to show that
\begin{equation*}
\ell(h, \gamma; x) = \Phi \left( \frac{b(h) + x - \rho(h-\gamma)}{(1-\rho^2)^{1/2}} \right) - \Phi \left( \frac{b(h) - x - \rho(h-\gamma)}{(1-\rho^2)^{1/2}} \right)
\end{equation*}
and 
\begin{equation*}
\ell^{\dag}(h, \gamma) = \Phi \left( \frac{z(\alpha) - \rho(h-\gamma)}{(1-\rho^2)^{1/2}} \right) - \Phi \left( \frac{- z(\alpha) - \rho(h-\gamma)}{(1-\rho^2)^{1/2}} \right).
\end{equation*}
It follows that
\begin{align*}
\frac{d\, q(x; h, \bm{\gamma}, \boldsymbol{\nu})}{dx}
= t_1(h,  \bm{\gamma}, \bm{\nu}) - t_2(x; h, \bm{\gamma}, \boldsymbol{\nu}),
\end{align*}
where $t_1(h,  \bm{\gamma}, \bm{\nu})$ is defined to be
\begin{align*}
\frac{1}{z(\alpha)} \Big( 2\phi(h) 
+ \sum_{j=1}^{m_2} \nu_2(j)\, \big( \phi(h-\gamma_2(j)) + \phi(h+\gamma_2(j)) \big) \Big)
\end{align*}
and $t_2(x; h, \bm{\gamma}, \boldsymbol{\nu})$ is defined to be
\begin{align*}
\sum_{j=1}^{m_1} \nu_1(j) \left( \phi(h-\gamma_1(j)) \frac{d\, \ell(h, \gamma_1(j); x)}{dx} + \phi(h+\gamma_1(j)) \frac{d\, \ell(-h, \gamma_1(j); x)}{dx} \right),
\end{align*}
with
\begin{equation*}
\frac{d\, \ell(h, \gamma; x)}{dx} = \frac{1}{(1-\rho^2)^{1/2}}\Bigg( \phi \left( \frac{b(h) + x - \rho(h-\gamma)}{(1-\rho^2)^{1/2}} \right) + \phi \left( \frac{b(h) - x - \rho(h-\gamma)}{(1-\rho^2)^{1/2}} \right) \Bigg).
\end{equation*}

Suppose that $h \in [0,c]$ and $(\bm{\gamma}, \boldsymbol{\nu})$ are given. Then 
$t_1(h,  \bm{\gamma}, \bm{\nu})$
is a fixed positive number. Observe that
$t_2(x; h, \bm{\gamma}, \boldsymbol{\nu})$
is a function of $x \in [0, \infty)$ that can only take positive values and
$d\ell(h, \gamma; x)/dx$
approaches 0 as $x \rightarrow \infty$.
We will use the following theorem to find $\widetilde{x} < \infty$, such that $dq(x; h, \bm{\gamma}, \bm{\nu})/dx > 0$ for all $x \ge \widetilde{x}$. This implies that a value
of $x$ that 
minimizes $q(x; h, \bm{\gamma}, \bm{\nu})$
cannot belong to the interval $[\widetilde{x}, \infty)$.

\begin{theorem}
	Let $\mu(h, \rho, \gamma) = b(h) - \rho(h-\gamma)$. Then $t_2(x; h, \bm{\gamma}, \boldsymbol{\nu})$
	is a decreasing function of $x \in \big[x^*, \infty \big)$, where 
	\begin{equation*}
	x^* = \max \big( |\mu(h, \rho, \gamma_1(1))|, \, |\mu(-h, \rho, \gamma_1(1))|, \dots,   |\mu(h, \rho, \gamma_1(m_1))|, \, |\mu(-h, \rho, \gamma_1(m_1))|\big).
	\end{equation*}
\end{theorem}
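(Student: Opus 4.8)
The plan is to reduce the claim to a one-dimensional monotonicity fact about a single ``building block'' of $t_2$, and then assemble it by taking nonnegative linear combinations. Write $\kappa = (1-\rho^2)^{1/2}$, which is positive since $|\rho| < 1$, and for a real number $\mu$ set $g_\mu(x) = \phi\big((\mu+x)/\kappa\big) + \phi\big((\mu-x)/\kappa\big)$. The first step is to rewrite $d\ell(h, \gamma; x)/dx$ from Appendix A.3 in terms of $\mu(h, \rho, \gamma) = b(h) - \rho(h - \gamma)$, which gives $d\ell(h, \gamma; x)/dx = \kappa^{-1} g_{\mu(h,\rho,\gamma)}(x)$. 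Substituting this into the definition of $t_2$ then yields
\[
t_2(x; h, \bm{\gamma}, \boldsymbol{\nu}) = \frac{1}{\kappa} \sum_{j=1}^{m_1} \nu_1(j) \Big( \phi(h - \gamma_1(j))\, g_{\mu(h,\rho,\gamma_1(j))}(x) + \phi(h + \gamma_1(j))\, g_{\mu(-h,\rho,\gamma_1(j))}(x) \Big),
\]
in which every scalar coefficient $\kappa^{-1} \nu_1(j)\, \phi(h \mp \gamma_1(j))$ is nonnegative.

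The core lemma I would prove is that, for each fixed real $\mu$, the function $g_\mu$ is nonincreasing on $[\,|\mu|, \infty)$. I would establish this by differentiating, using $\phi'(t) = -t\phi(t)$, to obtain
\[
g_\mu'(x) = \frac{1}{\kappa^2} \Big( -(\mu + x)\, \phi\big((\mu+x)/\kappa\big) + (\mu - x)\, \phi\big((\mu-x)/\kappa\big) \Big),
\]
and then observing that for $x \ge |\mu|$ we have $\mu + x \ge 0$ and $\mu - x \le 0$, so both terms in the bracket are $\le 0$ and hence $g_\mu'(x) \le 0$ throughout $[\,|\mu|, \infty)$ (with strict inequality for $x > |\mu|$ when $\mu \ne 0$). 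Granting the lemma, and with $x^*$ the maximum of $|\mu(h,\rho,\gamma_1(j))|$ and $|\mu(-h,\rho,\gamma_1(j))|$ over $j = 1, \dots, m_1$ as in the statement, each of the functions $g_{\mu(h,\rho,\gamma_1(j))}$ and $g_{\mu(-h,\rho,\gamma_1(j))}$ is nonincreasing on $[x^*, \infty)$; since $t_2(\cdot; h, \bm{\gamma}, \boldsymbol{\nu})$ is a nonnegative linear combination of these functions, it is itself nonincreasing on $[x^*, \infty)$, which is the desired conclusion.

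I do not anticipate a genuine obstacle, since the entire argument is a short sign computation. The two points that need a little care are (i) keeping the two families of terms straight --- those built from $\mu(h,\rho,\gamma_1(j))$ and those from $\mu(-h,\rho,\gamma_1(j))$ --- so that the single threshold $x^*$ dominates all of the relevant values $|\mu(\pm h, \rho, \gamma_1(j))|$, and (ii) confirming that the coefficients $\kappa^{-1} \nu_1(j)\, \phi(h \mp \gamma_1(j))$ are genuinely $\ge 0$, which holds because each $\nu_1(j) \ge 0$ and $\phi, \kappa > 0$, so that the property of being nonincreasing is preserved under the summation. If a strict version were wanted one would additionally assume $\sum_{j=1}^{m_1} \nu_1(j) > 0$ together with at least one of the corresponding $\mu(\pm h, \rho, \gamma_1(j))$ being nonzero; the version stated above requires nothing beyond the sign analysis.
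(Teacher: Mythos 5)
Your proposal is correct and follows essentially the same route as the paper: both reduce the claim to the monotonicity of the single building block $\phi\big((\mu+x)/\kappa\big)+\phi\big((\mu-x)/\kappa\big)$ on $[\,|\mu|,\infty)$ and then pass to the nonnegative linear combination with threshold $x^*$. The only (immaterial) difference is that you verify the one-dimensional lemma by an explicit sign computation of the derivative, whereas the paper rewrites the sum via the evenness of $\phi$ in terms of $|\mu|$ and invokes the monotonicity of $\phi$ on $[0,\infty)$.
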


\begin{proof}
	We first prove that, for every $h \in \mathbb{R}$, $d \ell(h, \gamma; x)/dx$
	is a decreasing function of $x \in \big[|\mu(h, \rho, \gamma)| , \infty)$.
	Observe that,
	for all $x \ge 0$,
	\begin{align*}
	&\phi \left( \frac{b(h) + x - \rho(h-\gamma)}{(1-\rho^2)^{1/2}} \right) + \phi \left( \frac{b(h) - x - \rho(h-\gamma)}{(1-\rho^2)^{1/2}} \right)
	\\
	&= \phi \left( \frac{b(h) - \rho(h-\gamma) + x }{(1-\rho^2)^{1/2}} \right) + \phi \left( \frac{- \big(b(h) - \rho(h-\gamma) \big) + x}{(1-\rho^2)^{1/2}} \right),
\  \text{since $\phi$ is an even function,}
\\
	&= \phi \left( \frac{\mu(h, \rho, \gamma) + x }{(1-\rho^2)^{1/2}} \right) + \phi \left( \frac{- \mu(h, \rho, \gamma) + x}{(1-\rho^2)^{1/2}} \right) 
	\\
	&= \phi \left( \frac{|\mu(h, \rho, \gamma)| + x }{(1-\rho^2)^{1/2}} \right) + \phi \left( \frac{- |\mu(h, \rho, \gamma)| + x}{(1-\rho^2)^{1/2}} \right).
	\end{align*}
	 This is a decreasing function of $x \in \big[|\mu(h, \rho, \gamma)| , \infty)$.
	 Consequently, $d \ell(h, \gamma; x)/dx$ and
	  $d \ell(-h, \gamma; x)/dx$
	 are decreasing functions of $x \in \big[|\mu(h, \rho, \gamma)| , \infty)$ and
	 $x \in \big[|\mu(-h, \rho, \gamma)| , \infty)$, respectively.

	 Thus $d \ell(h, \gamma_1(j); x)/dx$ is a decreasing function of $x \in \big[|\mu(h, \rho, \gamma_1(j))| , \infty \big)$, for $j = 1, \dots, m_1$.
	Therefore
	\begin{equation*}
	\sum_{j=1}^{m_1} \nu_1(j)\, \phi(h-\gamma_1(j)) \frac{d\, \ell(h, \gamma_1(j); x)}{dx}
	\end{equation*}
	is a decreasing function of $x \in \Big[\underset{j = 1, \dots, m_1}{\mathrm{max}} |\mu(h, \rho, \gamma_1(j)) | , \infty \Big)$.
 Similarly, $d \ell(-h, \gamma_1(j); s)/ds$ is a decreasing function of $s \in \big[|\mu(-h, \rho, \gamma_1(j))| , \infty \big)$, for $j = 1, \dots, m_1$.
	Therefore
	\begin{equation*}
	\sum_{j=1}^{m_1} \nu_1(j)\, \phi(h+\gamma_1(j)) \frac{d\, \ell(-h, \gamma_1(j); s)}{ds}
	\end{equation*}
	is a decreasing function of $s \in \Big[\underset{j = 1, \dots, m_1}{\mathrm{max}} |\mu(-h, \rho, \gamma_1(j)) | , \infty \Big)$. Therefore
	$t_2(x; h, \bm{\gamma}, \boldsymbol{\nu})$
	is a decreasing function of $x \in \big[x^*, \infty \big)$.
\end{proof}

We use this theorem to find $\widetilde{x} < \infty$, such that $dq(x; h, \bm{\gamma}, \bm{\nu})/dx > 0$ for all $x \ge \widetilde{x}$ as follows. 
First evaluate $x^*$ and then $dq(x^*; h, \bm{\gamma}, \boldsymbol{\nu})/dx$. 
 If $dq(x^*; h, \bm{\gamma}, \boldsymbol{\nu})/dx > 0$ then set $\widetilde{x} = x^*$ and stop;
otherwise use the \texttt{R} function \texttt{uniroot} to find the solution for 
$x \in [x^*, \infty)$ of $dq(x; h, \bm{\gamma}, \boldsymbol{\nu})/dx = 0$ and then set $\widetilde{x}$ equal to this solution.

\end{document}